\documentclass[11pt]{amsart}

\usepackage{amsmath,amssymb,amsthm,mathtools}
\usepackage{enumitem}
\usepackage{hyperref}
\usepackage[margin=1.15in]{geometry}

\newtheorem{theorem}{Theorem}[section]
\newtheorem{lemma}[theorem]{Lemma}

\newtheorem{remark}[theorem]{Remark}

\title{On Talagrand Type}

\author{Nika Areshidze}
\address{
Department of Mathematics,
University of California, Irvine,
CA 92697, USA
}
\email{nareshid@uci.edu}

\subjclass[2020]{Primary 46B09}

\keywords{Rademacher type, Talagrand type, Hamming cube, influence inequalities.}

\begin{document}

\begin{abstract}
We prove that Rademacher type $s$ coincides with Talagrand type $(s,\psi_{s,s/2})$ for every $1<s<2$.
\end{abstract}

\maketitle

\vspace{-2.3em}

\section{Introduction}

Let $\Omega_n=\{-1,1\}^n$ be the $n$-dimensional Hamming cube equipped with the uniform probability measure. For a Banach space $X$ and a function $f:\Omega_n\to X$, we write $\mathbb Ef$ for the average of $f$ over $\Omega_n$. For $j\in[n]$, let
$$
D_jf(\varepsilon)
=
\frac{f(\varepsilon)-f(\varepsilon^{(j)})}{2},
$$
where $\varepsilon^{(j)}$ is obtained from $\varepsilon$ by changing the sign of its $j$-th coordinate.

For scalar-valued functions, the classical Poincar\'e inequality on the Hamming cube states that
$$
\|f-\mathbb Ef\|_{L^2(\Omega_n)}^2
\le
\sum_{j=1}^n
\|D_jf\|_{L^2(\Omega_n)}^2.
$$
A fundamental strengthening of this inequality was obtained by Talagrand \cite{Talagrand}. In particular, he proved that there exists a universal constant $C>0$ such that for every $n\in\mathbb{N}$ and every function $f:\Omega_n\to\mathbb{R}$, one has
$$
\|f-\mathbb Ef\|_{L^2(\Omega_n)}^2
\le C
\sum_{j=1}^n
\frac{\|D_jf\|_{L^2(\Omega_n)}^2}
{1+\log\left(
\|D_jf\|_{L^2(\Omega_n)}
/
\|D_jf\|_{L^1(\Omega_n)}
\right)}.
$$

In the vector-valued setting, inequalities of Poincar\'e type are closely related to geometric properties of the target Banach space. Recall that a Banach space $(X,\|\cdot\|_X)$ has Rademacher type $s$, where $1\le s\le 2$, if there exists a constant $0<T<\infty$ such that for every $n\in\mathbb{N}$ and every $x_1,\ldots,x_n\in X$,
$$
\mathbb{E}_{\varepsilon}
\left\|
\sum_{j=1}^{n}\varepsilon_j x_j
\right\|_{X}^{s}
\le
T^s
\sum_{j=1}^{n}
\|x_j\|_{X}^{s},
$$
where $\varepsilon_1,\ldots,\varepsilon_n$ are independent Rademacher random variables. The least such constant is denoted by $T_s^R(X)$.

A nonlinear counterpart of Rademacher type is Enflo type \cite{Enflo}. The equivalence between Rademacher type and Enflo type was proved by Ivanisvili, van Handel, and Volberg \cite{IVV}, settling a longstanding problem. A different proof of this equivalence was later given in \cite{AreshidzeEnflo}. Talagrand type, introduced by Cordero-Erausquin and Eskenazis \cite{CorderoEskenazis}, is a refinement of Enflo type. Thus, for Banach spaces, Talagrand type may be viewed as a stronger nonlinear form of Rademacher type. One can readily show that Talagrand type $(s,\psi_{s,\delta})$ implies Rademacher type $s$ for every $\delta\in [0,1]$.

Before introducing Talagrand type, recall that a function $\psi:[0,\infty)\to[0,\infty)$ is called a Young function if it is convex and satisfies
$$
\lim_{t\to 0}\frac{\psi(t)}{t}=0,
\qquad
\lim_{t\to\infty}\frac{\psi(t)}{t}=\infty.
$$
For a Banach space $X$ and a function $f:\Omega_n\to X$, the corresponding Orlicz norm is defined by
$$
\|f\|_{L_{\psi}(\Omega_n,X)}
=
\inf\left\{
\lambda>0:
\mathbb{E}_{\varepsilon}
\psi\left(
\frac{\|f(\varepsilon)\|_X}{\lambda}
\right)
\le 1
\right\}.
$$

Motivated by Talagrand's inequality, Cordero-Erausquin and Eskenazis introduced the notion of Talagrand type. They gave the definition for general metric spaces, but for the purposes of this paper we state it only for Banach spaces.

Let $X$ be a Banach space, let $\psi:[0,\infty)\to[0,\infty)$ be a Young function, and let $s\in(0,\infty)$. We say that $X$ has Talagrand type $(s,\psi)$ with constant $\tau\in(0,\infty)$ if for every $n\in\mathbb{N}$ and every $f:\Omega_n\to X$,
$$
\mathbb{E}_{\varepsilon,\eta}
\|f(\varepsilon)-f(\eta)\|_X^s
\le
\tau^s
\sum_{j=1}^n
\|D_jf\|_{L_{\psi}(\Omega_n,X)}^s.
$$
It is immediate that Talagrand type $(s,\psi)$ implies Rademacher type $s$.
Indeed, given $x_1,\ldots,x_n\in X$, define $f(\varepsilon)=\sum_{j=1}^n\varepsilon_jx_j.$
Then $\mathbb Ef=0$ and $D_jf(\varepsilon)=\varepsilon_jx_j.$ Hence
$$
\|D_jf\|_{L_\psi(\Omega_n,X)}
=
c_\psi\|x_j\|_X,
$$
where $c_\psi=\|\mathbf 1\|_{L_\psi}$. Moreover, by Jensen's inequality,
$$
\mathbb E_{\varepsilon,\eta}
\|f(\varepsilon)-f(\eta)\|_X^s
\ge
\mathbb E_\varepsilon
\left\|
f(\varepsilon)-\mathbb E_\eta f(\eta)
\right\|_X^s
=
\mathbb E_\varepsilon
\|f(\varepsilon)\|_X^s.
$$
Therefore, Talagrand type $(s,\psi)$ yields
$$
\mathbb E_\varepsilon
\left\|
\sum_{j=1}^n\varepsilon_jx_j
\right\|_X^s
\le
(\tau c_\psi)^s
\sum_{j=1}^n\|x_j\|_X^s,
$$
so $X$ has Rademacher type $s$.

They also observed \cite{CorderoEskenazis} that if a Banach space $X$ has the property that for every $n\in\mathbb{N}$ and every $f:\Omega_n\to X$,
$$
\|f-\mathbb{E}f\|_{L^s(\Omega_n,X)}^s
\le
\tau_*^s
\sum_{j=1}^n
\|D_jf\|_{L_{\psi}(\Omega_n,X)}^s
$$
for some $\tau_*\in(0,\infty)$, then $X$ has Talagrand type $(s,\psi)$.

For $s\in(1,\infty)$ and $\delta\in[0,1]$, let $\psi_{s,\delta}:[0,\infty)\to[0,\infty)$ be a Young function satisfying
$$
\psi_{s,\delta}(t)
=
t^s\log^{-\delta}(e+t)
$$
for all sufficiently large $t$.

Cordero-Erausquin and Eskenazis \cite{CorderoEskenazis} proved that if a Banach space $X$ has Rademacher type $s$, where $1<s\le 2$, then for every $\varepsilon\in(0,s/2)$, the space $X$ has Talagrand type $\left(s,\psi_{s,s/2-\varepsilon}\right).$ They also proved that if $X$ has martingale type $s$, then $X$ has Talagrand type $\left(s,\psi_{s,s/2}\right).$ This led them to ask whether every Banach space of Rademacher type $s$ also has Talagrand type $\left(s,\psi_{s,s/2}\right).$

As noted above, in order to settle this problem it is enough to prove a Poincar\'e-type inequality of the form
$$
\|f-\mathbb{E}f\|_{L^s(\Omega_n,X)}
\le
C
\left(
\sum_{j=1}^n
\|D_jf\|_{L_{\psi_{s,s/2}}(\Omega_n,X)}^s
\right)^{1/s}.
$$
Our main result gives the endpoint exponent $\delta=s/2$ throughout the range
$1<s<2$, showing that the loss of an arbitrary $\varepsilon>0$ in the result
of Cordero-Erausquin and Eskenazis can be removed. The main estimate behind
this result is the following.

\begin{theorem}\label{thm:integral-estimate}
Let $1<s\le2$, and let $X$ be a Banach space with Rademacher type $s$.
Then for every $n\in\mathbb{N}$ and every $f:\Omega_n\to X$,
$$
\|f-\mathbb{E}f\|_{L^s(\Omega_n,X)}
\le
T_s^R(X)
\int_0^1
(1-\rho)^{1/s-1}
\left(
\sum_{j=1}^n
\|D_jf\|_{L^{1+\rho(s-1)}(\Omega_n,X)}^s
\right)^{1/s}
\,d\rho.
$$
\end{theorem}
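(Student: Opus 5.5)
We will run a semigroup interpolation argument, in the spirit of the Ivanisvili--van Handel--Volberg proof that Rademacher type implies Enflo type. Let $P_t=e^{-t\Delta}$ be the heat semigroup on $\Omega_n$, where $\Delta=\sum_j D_j$. Since $f-\mathbb Ef=\int_0^\infty \Delta P_tf\,dt$, we get
$$
\|f-\mathbb Ef\|_{L^s(\Omega_n,X)}\le \int_0^\infty \|\Delta P_t f\|_{L^s(\Omega_n,X)}\,dt .
$$
The key identity is the IVV representation. For $\xi_j(t)=\xi_j\in\{-1,1\}$ independent with $\mathbb P(\xi_j=1)=\frac{1+e^{-t}}{2}$, and $\delta_j(t)=\frac{\xi_j-e^{-t}}{1-e^{-t}}$, we have
$$
\Delta P_tf(\varepsilon)=\frac{1}{e^t-1}\,\mathbb E_\xi\Big[\sum_{j}\delta_j(t)\,D_jf(\varepsilon\xi)\Big].
$$

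For fixed $t$ we apply Rademacher type. The $\delta_j(t)$ are independent, mean zero and non-symmetric. Symmetrize them against independent Rademachers $\varepsilon'$. Since $D_jf(\varepsilon\xi)$ changes sign under flipping the $j$-th coordinate, one can pair coordinates to obtain $\varepsilon'_j$-symmetry directly, as IVV do. Then, for each fixed $\varepsilon,\xi$, Jensen and type give
$$
\|\Delta P_tf\|_{L^s}\le \frac{T_s^R(X)}{e^t-1}\Big(\mathbb E_{\varepsilon,\xi}\sum_j|\delta_j(t)|^s\|D_jf(\varepsilon\xi)\|^s\Big)^{1/s}.
$$
We must keep the constant exactly $T_s^R(X)$. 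I would do the symmetrization via the IVV trick: replace $\xi$ by a pair $(\xi,\xi')$, so that conditionally the weight becomes an honest Rademacher sum $\varepsilon'_j\,\frac{|\xi_j-\xi_j'|}{\cdots}$ with no factor $2$ lost.

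The crucial new step is to avoid putting the full $L^s$ norm on $D_jf$. Conditioning on $\xi_j$, $|\delta_j|^s$ is large only on the rare event $\xi_j=-1$, where $|\delta_j|=\frac{1+e^{-t}}{1-e^{-t}}$, and $\varepsilon\xi$ is uniform on the cube. Rather than bounding by the $L^s$ norm, we use a hypercontractive-type reduction: write $D_jf(\varepsilon\xi)$ through its dependence on the noise. With $\rho=e^{-t}$-type correlations, the average $\mathbb E_\xi[\delta_j D_jf(\varepsilon\xi)]$ involves $D_j P_t f$. Bonami--Beckner hypercontractivity is vector-valued, since $P_t$ is a positive averaging operator, so $\|P_t g\|_{L^s}\le\|g\|_{L^{p}}$ with $p-1=e^{-2t}(s-1)$. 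This lets us replace $\|D_jf\|_{L^s}$ by $\|D_jf\|_{L^{1+\rho(s-1)}}$ at the correct time scale. Concretely, I would split $P_t=P_{t/2}P_{t/2}$: the Rademacher-sum representation is used on one half, and hypercontractivity of $P_{t/2}$ is applied to each $D_jf$ on the other, pulling $\|D_jP_{t/2}f\|_{L^s}$ down to $\|D_jf\|_{L^{1+e^{-t}(s-1)}}$.

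Finally we change variables $\rho=e^{-t}$, so $dt=d\rho/\rho$. The time factor is $\frac{1}{e^t-1}\big(\mathbb E|\delta_j|^s\big)^{1/s}$, and $\mathbb E|\delta_j(t)|^s\asymp (1-\rho)^{1-s}$ as $\rho\to1$. Combined with the exponent bookkeeping from the split, this should produce the weight $(1-\rho)^{1/s-1}$, with constant exactly $1$ in front of $T_s^R(X)$.

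The main obstacle is making the constants and exponents come out exactly, with no stray absolute constant and precisely the exponent $1+\rho(s-1)$. This requires choosing the representation so that type and hypercontractivity apply at the same time parameter, perhaps using a single $\rho$-correlated copy in which the $L^{1+\rho(s-1)}$ norm appears via hypercontractivity applied to the conditional law. If the half-time split loses a constant, I would instead apply type pointwise in $\varepsilon$ under the $\rho$-correlated coupling, then use hypercontractivity $\|\mathbb E[\,\cdot\,|\varepsilon]\|_{L^s}\le\|\cdot\|_{L^{1+\rho^2(s-1)}}$, and reparametrize.
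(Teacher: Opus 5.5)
There is a genuine gap. The proposal does not establish the inequality as stated, with weight exactly $(1-\rho)^{1/s-1}$ and constant exactly $T_s^R(X)$. It fails at the sentence asserting that the bookkeeping ``should produce'' this weight.

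First, your representation is misnormalized. For $n=1$ and $f(x)=x$ one has $\Delta P_tf=e^{-t}x$, while your right-hand side equals $\frac{1+e^{-t}}{e^t-1}\,x$, which is off by the factor $\coth(t/2)$. The correct identity is $\Delta P_tf(\varepsilon)=\frac{1}{\sqrt{e^{2t}-1}}\mathbb E_\xi\sum_j\frac{\xi_j-e^{-t}}{\sqrt{1-e^{-2t}}}D_jf(\varepsilon\xi)$. With your normalization and your own asymptotics $\mathbb E|\delta_j|^s\asymp(1-\rho)^{1-s}$, the time weight becomes $(1-\rho)^{1/s-2}\,d\rho$, which is not even integrable.

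Second, even after this correction, your half-time split gives a strictly weaker estimate, although it does produce the right exponent. Symmetrizing with an independent copy $\xi'$ gives $\mathbb E|\delta_j-\delta_j'|^s=2^{s-1}(1-\sigma^2)^{1-s/2}$ with $\sigma=e^{-t/2}$. Bonami--Beckner gives $\|P_{t/2}D_jf\|_{L^s}\le\|D_jf\|_{L^{1+e^{-t}(s-1)}}$. The substitution $\rho=e^{-t}$ then yields
\[
\|f-\mathbb Ef\|_{L^s(\Omega_n,X)}\le 2^{1-1/s}\,T_s^R(X)\int_0^1\rho^{-1/2}(1-\rho)^{1/s-1}\Big(\sum_{j=1}^n\|D_jf\|_{L^{1+\rho(s-1)}(\Omega_n,X)}^s\Big)^{1/s}d\rho .
\]
Its weight exceeds the required one by the factor $2^{1-1/s}\rho^{-1/2}>1$. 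This weaker bound would still suffice for Theorem~\ref{thm:rademacher-talagrand}, since $\rho^{-1/2}$ is integrable at $0$, but it is not Theorem~\ref{thm:integral-estimate}.

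Your fallback is not a workable step either. For fixed $\varepsilon$ the coefficients $D_jf(\varepsilon\xi)$ depend on $\xi$, so type cannot be applied before decoupling through $y=\varepsilon\xi$. After decoupling and Jensen, no averaging operator is left for hypercontractivity to act on. Moreover, hypercontractivity controls only the norm of an average $T_\rho h$, not an average of $q$-th powers.

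The missing idea is the random-subset representation $T_\rho f=\mathbb E_{A_\rho}E_{A_\rho}f$. It gives $\frac{d}{d\rho}T_\rho f=\frac1{1-\rho}\mathbb E_{A_\rho}\sum_{j\notin A_\rho}d_j^{A_\rho}f$, where $d_j^Af=\varepsilon_jE_Ag_j$ and $E_Ag_j$ depends only on $\varepsilon_A$.
\begin{itemize}
\item For fixed $A$ and $\varepsilon_A$ this is an honest Rademacher sum, so type applies with constant exactly $T_s^R(X)$ and no symmetrization. Your pairing remark does not achieve this in the heat-semigroup picture, because there the coefficient of each sign depends on the other noisy coordinates.
\item After Jensen in $A_\rho$, one has $\mathbb E_{A_\rho}[\mathbf 1_{\{j\notin A_\rho\}}\|d_j^{A_\rho}f\|_{L^s}^s]=(1-\rho)\,\mathbb E_{B_\rho}\|E_{B_\rho}g_j\|_{L^s}^s$.
\item The gain in integrability comes from Lemma~\ref{lem:random-restriction}, $(\mathbb E_{A_\rho}\|E_{A_\rho}h\|_{L^q}^q)^{1/q}\le\|h\|_{L^{1+\rho(q-1)}}$, proved from the Nair--Wang two-point inequality by tensorization.
\end{itemize}
That lemma bounds the average of $q$-th powers of the random restrictions, which dominates $\|T_\rho h\|_{L^q}$. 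This is why its sharp exponent is linear in $\rho$ rather than the hypercontractive $1+\rho^2(q-1)$. The factor $(1-\rho)^{1/s}$ it leaves combines with $\frac1{1-\rho}$ to give exactly $(1-\rho)^{1/s-1}$ with constant $1$.
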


Combining Theorem~\ref{thm:integral-estimate} with an appropriate Orlicz
embedding yields the endpoint Talagrand type inequality.

\begin{theorem}\label{thm:rademacher-talagrand}
Let $1<s<2$, and let $X$ be a Banach space with Rademacher type $s$.
Then $X$ has Talagrand type $(s,\psi_{s,s/2})$. More precisely, there exists
a constant $C_s<\infty$ such that for every $n\in\mathbb{N}$ and every
$f:\Omega_n\to X$,
$$
\|f-\mathbb{E}f\|_{L^s(\Omega_n,X)}
\le
C_sT_s^R(X)
\left(
\sum_{j=1}^n
\|D_jf\|_{L_{\psi_{s,s/2}}(\Omega_n,X)}^s
\right)^{1/s}.
$$
\end{theorem}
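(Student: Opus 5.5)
We derive Theorem~\ref{thm:rademacher-talagrand} from Theorem~\ref{thm:integral-estimate} by bounding each $L^p$ norm with $p=1+\rho(s-1)$ in terms of the Orlicz norm $\|\cdot\|_{L_{\psi_{s,s/2}}}$. Write $p(\rho)=1+\rho(s-1)\in[1,s)$, so that $s-p=(1-\rho)(s-1)$. The key claim is a one-variable Orlicz embedding: there is $C_s$ such that for every probability space, every nonnegative $g$, and every $p\in[1,s)$,
$$
\|g\|_{L^p}\le C_s\,(s-p)^{-1/2}\,\|g\|_{L_{\psi_{s,s/2}}}.
$$
To prove it, normalize $\|g\|_{L_{\psi_{s,s/2}}}=1$, so $\mathbb E\,\psi_{s,s/2}(g)\le1$. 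Then $\mathbb E g^p$ splits into the contribution of $g\le t_0$, which is $O(1)$, and the tail. For the tail, $g^p=\psi_{s,s/2}(g)\,g^{p-s}\log^{s/2}(e+g)$, so we bound
$$
\sup_{t\ge t_0} t^{-(s-p)}\log^{s/2}(e+t)\lesssim_s (s-p)^{-s/2},
$$
with the maximum attained near $\log t\approx \tfrac{s}{2(s-p)}$. This gives $\mathbb E g^p\lesssim_s (s-p)^{-s/2}$, hence
$$
\|g\|_p\lesssim_s (s-p)^{-s/(2p)}.
$$
The exponent $s/(2p)$ exceeds $1/2$ when $p<s$, so this crude bound must be refined. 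Note, however, that $s/(2p)\le s/2<1$ and $(s-p)^{-s/(2p)}=(s-p)^{-1/2}(s-p)^{-(s-p)/(2p)}$. The last factor is bounded uniformly, since $x^{-x/2}\le e^{1/(2e)}$ for $x\in(0,1]$ and $p\ge1$. So the embedding holds with exponent exactly $1/2$, up to constants depending on $s$. For $p$ near $1$, where $s-p$ is bounded below, the bound is just the ordinary embedding $L_{\psi}\subset L^p$. Since $\psi_{s,s/2}$ is only prescribed for large $t$, we also need $\psi_{s,s/2}(t)\gtrsim t$ on bounded ranges in the small-$g$ part; this is routine, by convexity and the normalization.

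Apply the embedding to $g=\|D_jf\|_X$ and sum over $j$:
$$
\Bigl(\sum_j\|D_jf\|_{L^{p(\rho)}}^s\Bigr)^{1/s}\le C_s\,((1-\rho)(s-1))^{-1/2}\Bigl(\sum_j\|D_jf\|_{L_{\psi_{s,s/2}}}^s\Bigr)^{1/s}.
$$
Inserting this into Theorem~\ref{thm:integral-estimate} leaves the integral
$$
\int_0^1(1-\rho)^{1/s-1-1/2}\,d\rho .
$$
It converges exactly when $1/s-1/2>0$, i.e.\ $s<2$, and its value is $(1/s-1/2)^{-1}$. This explains why $s=2$ is excluded, and gives $C_s$ proportional to $(s-1)^{-1/2}(1/s-1/2)^{-1}$.

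The main obstacle is the sharp $\rho$-dependence of the embedding constant: it must be $(1-\rho)^{-1/2}$ and not worse. Any larger power, such as the naive $s/(2p)$, would break integrability near $\rho=1$ once $s$ approaches $2$. This is why the supremum computation and the factor $x^{-x/2}$ need care. The remaining bookkeeping is routine: Young-function normalizations, and the passage from the Poincaré form to Talagrand type via the Cordero-Erausquin–Eskenazis observation recalled above.
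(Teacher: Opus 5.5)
Your proposal is correct and takes essentially the same route as the paper. You combine Theorem~\ref{thm:integral-estimate} with the embedding $\|h\|_{L^r}\lesssim (s-r)^{-1/2}\|h\|_{L_{\psi_{s,s/2}}}$ (Lemma~\ref{lem:orlicz-embedding}), proved by the same threshold split, the same maximization of $e^{-au}u^{s/2}$, and the same factorization $a^{-s/(2r)}=a^{-1/2}a^{-a/(2r)}$ with $a^{-a/2}\le e^{1/(2e)}$, and then integrate $(1-\rho)^{1/s-3/2}$. Your remark that one needs $\psi_{s,s/2}(t)\gtrsim t$ on bounded ranges is unnecessary, since the small-value part is bounded trivially by $t_0^p$.
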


We are unable to settle the endpoint case, and it is not even clear what one should expect to happen there. Both possibilities occur in related questions. For example, Talagrand \cite{TalagrandInfratype,TalagrandSymmetric} showed that Rademacher type $s$ and infratype $s$ coincide for every $1<s<2$, whereas the corresponding equivalence fails at $s=2$.

The paper is organized as follows. In Section 2, we collect the preliminary material, including the Orlicz estimate and the estimate for conditional expectations over random subsets used in the proof. In Section 3, we prove Theorems~\ref{thm:integral-estimate} and~\ref{thm:rademacher-talagrand}.

\section{Preliminaries}

Let $(X,\|\cdot\|_X)$ be a Banach space, and let
$\Omega_n=\{-1,1\}^n$ denote the $n$-dimensional Hamming cube equipped with
the uniform probability measure. We write $[n]=\{1,\ldots,n\}$. For a function $f:\Omega_n\to X$, we write
$$
\mathbb Ef
=
\frac{1}{2^n}\sum_{\varepsilon\in\Omega_n}f(\varepsilon),
$$
and, for $1\le p<\infty$,
$$
\|f\|_{L^p(\Omega_n,X)}
=
\left(
\mathbb E\|f(\varepsilon)\|_X^p
\right)^{1/p}.
$$
For $A\subseteq[n]$, we write
$\varepsilon_A=(\varepsilon_j)_{j\in A}$. Every function $f:\Omega_n\to X$ admits a Walsh expansion
$$
f(\varepsilon)
=
\sum_{S\subseteq[n]}\widehat f(S)W_S(\varepsilon),
$$
where
$$
W_S(\varepsilon)
=
\prod_{j\in S}\varepsilon_j,
\qquad
\widehat f(S)
=
\mathbb E\big[f(\varepsilon)W_S(\varepsilon)\big],
$$
with $W_\varnothing=1$. For a subset $A\subseteq[n]$, let $\mathcal F_A=\sigma(\varepsilon_j:j\in A)$
be the $\sigma$-algebra generated by the coordinates indexed by $A$, and write
$$
E_Af
=
\mathbb E[f\mid\mathcal F_A]
$$
for the corresponding conditional expectation. Then clearly
$$
E_Af=\sum_{S\subseteq A}\widehat f(S)W_S(\varepsilon).
$$

If $A\subseteq[n]$ and $j\notin A$, we define the increment associated with revealing the $j$-th coordinate by
$$
d_j^Af
=
E_{A\cup\{j\}}f-E_Af.
$$
Notice that for $j\notin A$, we have
$$d_j^Af=E_{A\cup\{j\}}D_jf.$$
Moreover,
$$d_j^Af(\varepsilon)=\varepsilon_jg_j^A(\varepsilon_A),$$
where
$$g_j^A(\varepsilon_A)=\sum_{S\subseteq A}\ \widehat{f}(S\cup\{j\})W_S(\varepsilon).$$

We shall also use the following estimate, proved in \cite{AreshidzeEnflo}.

\begin{lemma}\label{lem:reveal-type}
Let $1\le p\le2$, and suppose that $X$ has Rademacher type $p.$ Then for every
$A\subseteq[n]$,
$$\left\|
\sum_{j\notin A}d_j^Af
\right\|_{L^p(\Omega_n,X)}
\le
T_p^R(X)
\left(
\sum_{j\notin A}
\|d_j^Af\|_{L^p(\Omega_n,X)}^p
\right)^{1/p}.$$
\end{lemma}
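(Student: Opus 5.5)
The plan is to freeze the coordinates indexed by $A$ and apply the definition of Rademacher type directly, conditionally on $\varepsilon_A$. The key point is the structural identity recorded just before the lemma: for $j\notin A$,
$$
d_j^Af(\varepsilon)=\varepsilon_j\,g_j^A(\varepsilon_A),
$$
where $g_j^A$ depends only on the coordinates in $A$. Hence
$$
\sum_{j\notin A}d_j^Af(\varepsilon)=\sum_{j\notin A}\varepsilon_j\,g_j^A(\varepsilon_A).
$$
For each fixed $\varepsilon_A$, this is a Rademacher sum in the independent signs $(\varepsilon_j)_{j\notin A}$, with fixed vector coefficients $x_j=g_j^A(\varepsilon_A)\in X$.

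First, I would fix $\varepsilon_A$ and apply the definition of Rademacher type $p$ to the vectors $x_j=g_j^A(\varepsilon_A)$, $j\notin A$, taking the expectation only over $\varepsilon_{[n]\setminus A}$. This gives
$$
\mathbb E_{\varepsilon_{[n]\setminus A}}\Big\|\sum_{j\notin A}\varepsilon_j\, g_j^A(\varepsilon_A)\Big\|_X^p
\le T_p^R(X)^p\sum_{j\notin A}\|g_j^A(\varepsilon_A)\|_X^p .
$$
Second, I would average this inequality over $\varepsilon_A$. By independence of the coordinates, the left side becomes $\|\sum_{j\notin A}d_j^Af\|_{L^p(\Omega_n,X)}^p$. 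Third, since $|\varepsilon_j|=1$, we have $\|d_j^Af(\varepsilon)\|_X=\|g_j^A(\varepsilon_A)\|_X$ pointwise. So the right side becomes $T_p^R(X)^p\sum_{j\notin A}\|d_j^Af\|_{L^p(\Omega_n,X)}^p$. Taking $p$-th roots yields the claim.

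There is no serious obstacle here. The only point requiring care is verifying the factorization $d_j^Af=\varepsilon_j g_j^A(\varepsilon_A)$, which follows from the Walsh expansion. Indeed,
$$
E_{A\cup\{j\}}f-E_Af=\sum_{S\subseteq A}\widehat f(S\cup\{j\})\,W_{S\cup\{j\}},
$$
and $W_{S\cup\{j\}}=\varepsilon_jW_S$ for $S\subseteq A$ and $j\notin A$. With this identity in hand, the joint randomness splits as a product of the variables $\varepsilon_A$ and $\varepsilon_{[n]\setminus A}$. The type inequality is then applied fiberwise, and the result is integrated over $\varepsilon_A$.
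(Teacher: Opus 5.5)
Your proof is correct. The factorization $d_j^Af=\varepsilon_j g_j^A(\varepsilon_A)$ makes $\sum_{j\notin A}d_j^Af$, for each fixed $\varepsilon_A$, a Rademacher sum in the independent signs $(\varepsilon_j)_{j\notin A}$; applying the type inequality on each such fiber and averaging over $\varepsilon_A$ gives exactly the claim. The paper does not reprove this lemma but cites the author's earlier work, and it records this same factorization just before the statement, so your fiberwise argument is essentially the intended one.
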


For $0\le \rho\le1$, for every $f:\Omega_n\to X$ let $T_\rho$ denote the noise operator, 
$$T_\rho f=\sum_{S\subseteq[n]}\rho^{|S|}\widehat f(S)W_S.$$
Let $A_{\rho}\subseteq [n]$ be the random subset obtained by including each coordinate independently with probability $\rho$. Thus, for every $A\subseteq [n]$,

$$\mathbb{P}(A_\rho=A)=\rho^{|A|}(1-\rho)^{n-|A|}.$$
 
Then we have the following representation:

$$T_{\rho}f=\mathbb{E}_{A_{\rho}}E_{A_\rho}f$$

Indeed, 

$$\begin{aligned}
\mathbb{E}_{A_{\rho}}E_{A_\rho}f&=\sum_{A\subseteq [n]}\rho^{|A|}(1-\rho)^{n-|A|}E_{A}f\\
&=\sum_{S\subseteq [n]}\widehat f(S)W_S(\varepsilon)\sum_{\substack{A\subseteq[n]\\ S\subseteq A}}\rho^{|A|}(1-\rho)^{n-|A|}
\end{aligned}$$
Since 
$$\sum_{\substack{A\subseteq[n]\\ S\subseteq A}}\rho^{|A|}(1-\rho)^{n-|A|}=\mathbb{P}(S\subseteq A_{\rho})=\rho^{|S|}$$
we obtain 
$$
\mathbb{E}_{A_{\rho}}E_{A_\rho}f=\sum_{S\subseteq [n]}\rho^{|S|}\widehat f(S)W_S(\varepsilon)=T_{\rho}f.
$$

\begin{lemma}\label{lem:noise-derivative}
Let $0\le \rho<1$. Then, for every $f:\Omega_n\to X$,

$$\frac{d}{d\rho}T_{\rho}f=\frac{1}{1-\rho}\mathbb{E}_{A_\rho}\sum_{j\notin A_\rho}d_{j}^{A_\rho}f.$$
\end{lemma}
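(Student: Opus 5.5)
The statement to prove is Lemma~\ref{lem:noise-derivative}. The plan is to verify it coefficientwise on the Walsh expansion, since both sides are linear in $f$ and everything is a finite sum. By linearity it suffices to treat $f=W_S$ for a fixed $S\subseteq[n]$. Then the left-hand side is
$$
\frac{d}{d\rho}\rho^{|S|}W_S=|S|\rho^{|S|-1}W_S,
$$
interpreted as $0$ when $S=\varnothing$ and as $W_S$ when $|S|=1$.

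Next compute the right-hand side for $f=W_S$. For $A\subseteq[n]$ and $j\notin A$, we have $E_{A\cup\{j\}}W_S-E_AW_S=W_S$ exactly when $S\subseteq A\cup\{j\}$ but $S\not\subseteq A$, that is, when $j\in S$ and $S\setminus\{j\}\subseteq A$; otherwise the increment vanishes. This uses the formula $E_Af=\sum_{S\subseteq A}\widehat f(S)W_S$. Hence
$$
\sum_{j\notin A}d_j^A W_S=\#\{j\in S:\ A\cap S=S\setminus\{j\}\}\,W_S.
$$

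Now take the expectation over $A_\rho$. For each $j\in S$, the event $A_\rho\cap S=S\setminus\{j\}$ has probability $\rho^{|S|-1}(1-\rho)$. Summing over $j\in S$ gives
$$
\mathbb E_{A_\rho}\sum_{j\notin A_\rho}d_j^{A_\rho}W_S=|S|\rho^{|S|-1}(1-\rho)W_S.
$$
Dividing by $1-\rho$, which is allowed since $\rho<1$, recovers the left-hand side.

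The only delicate point is the edge cases $\rho=0$ and $|S|\le1$, where the convention $0^0=1$ must be applied consistently on both sides. For $|S|=1$ both sides equal $W_S$; for $S=\varnothing$ both sides vanish.

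Alternatively, one could differentiate $\sum_A\rho^{|A|}(1-\rho)^{n-|A|}E_Af$ directly and telescope. The Walsh computation above avoids that bookkeeping, so I would present it.
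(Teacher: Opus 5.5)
Your proposal is correct and takes essentially the same approach as the paper. You reduce to Walsh functions $W_S$, note that $d_j^AW_S=W_S$ exactly when $j\in S$ and $S\setminus\{j\}\subseteq A$, use that this event has probability $(1-\rho)\rho^{|S|-1}$, sum over $j\in S$, and divide by $1-\rho$; your explicit check of the cases $\rho=0$ and $|S|\le 1$ is slightly more careful than the paper's.
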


\begin{proof}
It is enough to verify the identity on the Walsh functions $W_S$. Fix $S\subseteq[n]$. Since
$$
T_{\rho}W_{S}=\rho^{|S|}W_S,
$$
we have
$$
\frac{d}{d\rho}T_{\rho}W_{S}=|S|\rho^{|S|-1}W_S.
$$
On the other hand, for $j\notin A_\rho$,
$$
d_{j}^{A_\rho}W_S
=
E_{A_\rho\cup\{j\}}W_S-E_{A_\rho}W_S.
$$
This is nonzero precisely when $j\in S$ and $S\setminus\{j\}\subseteq A_\rho$. In this case,
$$
d_{j}^{A_{\rho}}W_S=W_S.
$$

Now, since
$$
\mathbb{P}\bigl(j\notin A_\rho,\,
S\setminus\{j\}\subseteq A_\rho\bigr)
=
(1-\rho)\rho^{|S|-1},
$$
we obtain
$$
\mathbb{E}_{A_\rho}
\sum_{j\notin A_\rho}
d_{j}^{A_\rho}W_{S}
=
\sum_{j\in S}
(1-\rho)\rho^{|S|-1}W_S
=
|S|(1-\rho)\rho^{|S|-1}W_S.
$$
Dividing both sides by $1-\rho$, we obtain
$$
\frac{d}{d\rho}T_{\rho}W_S
=
\frac{1}{1-\rho}
\mathbb{E}_{A_\rho}
\sum_{j\notin A_\rho}
d_{j}^{A_\rho}W_{S}.
$$
The case $S=\varnothing$ is immediate, since both sides vanish. By linearity, the identity holds for every $f:\Omega_n\to X$.
\end{proof}

\begin{lemma}\label{lem:random-restriction}
Let $1<q\le2$ and $0\le\rho\le1$. Then for every Banach space $X$ and every function $h:\Omega_n\to X$,
$$
\left(
\mathbb{E}_{A_\rho}
\|E_{A_\rho}h\|_{L^q(\Omega_n,X)}^{q}
\right)^{1/q}
\le
\|h\|_{L^{r}(\Omega_n,X)},
$$
where $r=1+\rho(q-1)$.
\end{lemma}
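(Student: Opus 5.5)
The plan is to tensorize: reduce the statement to a one-coordinate (two-point) inequality, and then induct on $n$ using the independence of the coordinates and of the events $\{j\in A_\rho\}$. Note that $1\le r\le q$, and that the left-hand side is a mixed norm. Writing $\Phi_n(h)=\mathbb E_{A_\rho}\|E_{A_\rho}h\|_{L^q(\Omega_n,X)}^q$, the random set $A_\rho$ splits as $A_\rho=A'\cup A''$. Here $A'=A_\rho\cap\{1\}$ and $A''=A_\rho\cap\{2,\dots,n\}$ are independent, and $E_{A_\rho}=E_{A'}E_{A''}$ with the two conditional expectations commuting and acting on different coordinates.

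\textbf{Step 1 (the two-point inequality).} I first prove that for every Banach space $Y$ and every $g:\{-1,1\}\to Y$,
$$
\rho\,\mathbb E\|g\|_Y^q+(1-\rho)\,\|\mathbb Eg\|_Y^q\le \big(\mathbb E\|g\|_Y^r\big)^{q/r},\qquad r=1+\rho(q-1).
$$
Since $\|\mathbb Eg\|_Y\le \mathbb E\|g\|_Y$, it suffices to treat two nonnegative numbers $u,v$ with power means $M_p=\big(\tfrac{u^p+v^p}{2}\big)^{1/p}$, and to show
$$
\rho M_q^q+(1-\rho)M_1^q\le M_r^q .
$$
I expect this scalar inequality to be the main obstacle. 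By homogeneity, set $u=1+x$ and $v=1-x$ with $x\in[0,1]$. At $\rho=0$ and $\rho=1$ the inequality is an identity. I would try to show that $\rho\mapsto M_{1+\rho(q-1)}^q$ dominates the chord in $\rho$. One route is to check that $p\mapsto \log M_p$, reparametrized suitably, is concave enough; another is to compare Taylor expansions in $x$, using $M_p\approx 1+(p-1)x^2/2$. If a direct proof resists, I would fall back on the series expansion in $x^2$ and a termwise comparison, which is where $q\le2$ should enter.

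\textbf{Step 2 (induction).} For $n=1$, Step 1 with $Y=X$ is exactly the claim. For the inductive step, condition on $A''$ and apply Step 1 in the first coordinate. I take $Y=L^q(\Omega_{n-1},X)$ and $g(\varepsilon_1)=E_{A''}h(\varepsilon_1,\cdot)$, which gives
$$
\Phi_n(h)\le \mathbb E_{A''}\Big(\mathbb E_{\varepsilon_1}\|E_{A''}h(\varepsilon_1,\cdot)\|_{L^q}^r\Big)^{q/r}.
$$
Next I use Minkowski's integral inequality with $q/r\ge1$ to move $\mathbb E_{A''}$ inside. This yields
$$
\Phi_n(h)\le\Big(\mathbb E_{\varepsilon_1}\big(\mathbb E_{A''}\|E_{A''}h(\varepsilon_1,\cdot)\|_{L^q}^q\big)^{r/q}\Big)^{q/r}.
$$
The inner quantity is $\Phi_{n-1}(h(\varepsilon_1,\cdot))$, which by the induction hypothesis is at most $\|h(\varepsilon_1,\cdot)\|_{L^r(\Omega_{n-1},X)}^q$. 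Substituting gives $\Phi_n(h)\le\|h\|_{L^r(\Omega_n,X)}^q$, which is the claim.

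The delicate points are that in Step 2 one must apply Step 1 with $Y$ an $L^q$-space while the output is measured in $L^r$ in $\varepsilon_1$, and that the order of the mixed norms after Minkowski must be the one produced by $r\le q$. I would check this bookkeeping carefully, but the real content lies in the scalar inequality of Step 1.
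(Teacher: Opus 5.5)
\textbf{Step 1 is a genuine gap.} Step 2 is correct. It is essentially the paper's tensorization: condition on the other coordinates, apply the one-coordinate bound, use Minkowski in $L^{q/r}$ (valid since $r\le q$), and induct. The only difference is cosmetic. You apply the two-point bound with values in $Y=L^q(\Omega_{n-1},X)$, which gives the $X$-valued case directly. The paper instead proves the scalar case first, applying the $n=1$ bound pointwise in $\varepsilon'$, and then passes to $X$ via $\|E_Ah\|_X\le E_A\|h\|_X$.

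The problem is that the scalar inequality
\[
\rho M_q^q+(1-\rho)M_1^q\le M_r^q,\qquad r=1+\rho(q-1),
\]
carries the entire content of the lemma, and you do not prove it; you only list routes to try. None of these routes, as stated, gives a proof.
\begin{itemize}
\item \emph{Concavity of $p\mapsto\log M_p$.} This only gives $M_r^q\ge (M_1^q)^{1-\rho}(M_q^q)^{\rho}$, a weighted geometric mean. That is smaller than the weighted arithmetic mean you need. What you actually need is that $\rho\mapsto M_{1+\rho(q-1)}^q$ lies above its chord, which is a different and stronger statement that you have not established.
\item \emph{Taylor expansion.} This is degenerate here. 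With $u=1+x$, $v=1-x$, both sides equal $1+\tfrac12\rho q(q-1)x^2+O(x^4)$, so the leading terms cancel exactly and a local expansion says nothing for $x$ near $1$.
\item \emph{Termwise comparison of series.} This would require controlling every coefficient of the composite series $\bigl(\tfrac{(1+x)^r+(1-x)^r}{2}\bigr)^{q/r}$, which you have not done.
\end{itemize}
Moreover, the inequality is false for larger $q$. For example, $q=6$, $\rho=0.01$, $(u,v)=(0,2)$ gives left side $1.31$ and right side $2^{2/7}\approx1.22$. So the hypothesis $q\le2$ must be used in a global, non-perturbative way, not just to sign a few series coefficients.

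The paper closes this step by identifying the inequality, after writing $a=m(1-\delta)$, $b=m(1+\delta)$, as the hypercontractivity inequality of Nair and Wang for the binary erasure channel. The parameters are erasure probability $\varepsilon=1-\rho$, $\lambda_1=r$, $\lambda_2'=q$. It then checks their hypotheses $(\lambda_1-1)(\lambda_2-1)=(r-1)/(q-1)=\rho=1-\varepsilon$ and $\lambda_2=q/(q-1)\ge2$. Without that external result, or an independent proof of the two-point bound for $1<q\le2$, your argument correctly reduces the lemma to its hardest case but does not prove it.
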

 
\begin{proof}
We first consider the scalar case. Let $h:\Omega_n\to\mathbb{R}$. The endpoint cases $\rho=0$ and $\rho=1$ are immediate, since $A_\rho=\varnothing$ and $A_\rho=[n]$, respectively. Thus, assume $0<\rho<1$.

For $n=1$, we have
$$
A_\rho=
\begin{cases}
\{1\}, & \text{with probability } \rho,\\
\varnothing, & \text{with probability } 1-\rho.
\end{cases}
$$
Hence,
$$
E_{\{1\}}h=h,
\qquad
E_{\varnothing}h=\mathbb{E}h.
$$
Therefore,
$$
\mathbb{E}_{A_\rho}\|E_{A_\rho}h\|_{L^q(\Omega_1)}^{q}
=
\rho\|h\|_{L^q(\Omega_1)}^q
+
(1-\rho)|\mathbb{E}h|^q.
$$

Let $a=h(-1)$ and $b=h(1)$. Then the inequality we want to prove becomes
$$
\left(
\rho\frac{|a|^q+|b|^q}{2}
+
(1-\rho)\left|\frac{a+b}{2}\right|^q
\right)^{1/q}
\le
\left(
\frac{|a|^r+|b|^r}{2}
\right)^{1/r}.
$$
Since $|a+b|\le |a|+|b|$, it is enough to prove
$$
\left(
\rho\frac{|a|^q+|b|^q}{2}
+
(1-\rho)\left(\frac{|a|+|b|}{2}\right)^q
\right)^{1/q}
\le
\left(
\frac{|a|^r+|b|^r}{2}
\right)^{1/r}.
$$
Thus, we may replace $h$ by $|h|$, and therefore assume that $a,b\ge0$. If $a=b=0$, there is nothing to prove. Otherwise, set
$$
m=\frac{a+b}{2}>0.
$$
We may also assume that $a\le b$ and define
$$
\delta=\frac{b-a}{a+b}.
$$
Then $0\le\delta\le1$. Notice that
$$
a=m(1-\delta),
\qquad
b=m(1+\delta).
$$
Substituting these into the inequality and dividing both sides by $m$, we obtain
$$
\left(
\frac{\rho}{2}(1-\delta)^q
+
\frac{\rho}{2}(1+\delta)^q
+
(1-\rho)
\right)^{1/q}
\le
\left(
\frac{(1-\delta)^r+(1+\delta)^r}{2}
\right)^{1/r}.
$$

Now set
$$
1-\varepsilon=\rho,
\qquad
\lambda_1=r,
\qquad
\lambda_2'=q.
$$
Then our inequality becomes
$$
\left(
\frac{1-\varepsilon}{2}(1-\delta)^{\lambda_2'}
+
\frac{1-\varepsilon}{2}(1+\delta)^{\lambda_2'}
+
\varepsilon
\right)^{1/\lambda_2'}
\le
\left(
\frac{(1-\delta)^{\lambda_1}+(1+\delta)^{\lambda_1}}{2}
\right)^{1/\lambda_1},
$$
which is exactly the inequality in \cite{NairWang}. It remains only to check that our parameters satisfy their assumptions. Since $\lambda_2'=q$, we have
$$
\lambda_2=\frac{q}{q-1},
$$
and therefore
$$
\lambda_2-1=\frac{1}{q-1}.
$$
Since $r=1+\rho(q-1)$, we obtain
$$
(\lambda_1-1)(\lambda_2-1)
=
(r-1)\frac{1}{q-1}
=
\rho
=
1-\varepsilon.
$$
Also, because $1<q\le2$,
$$
\lambda_2=\frac{q}{q-1}\ge2.
$$
Thus, our parameters satisfy the assumptions of their result, and the case $n=1$ follows.

Now assume that the result has been proved for $n-1$, and let us prove it for $n$. Set
$$
\varepsilon=(\varepsilon',\varepsilon_n),
$$
where $\varepsilon'\in\Omega_{n-1}$, and define
$$
h_{-}(\varepsilon')=h(\varepsilon',-1),
\qquad
h_{+}(\varepsilon')=h(\varepsilon',1).
$$
Let $B_\rho\subseteq[n-1]$ be the random set defined as above, and define $A_\rho\subseteq[n]$ by
$$
A_\rho=
\begin{cases}
B_\rho\cup\{n\}, & \text{with probability } \rho,\\
B_\rho, & \text{with probability } 1-\rho,
\end{cases}
$$
where the decision to include $n$ is independent of $B_\rho$.

If $A_\rho=B_\rho\cup\{n\}$, then
$$
E_{B_\rho\cup\{n\}}h(\varepsilon',-1)
=
E_{B_\rho}h_{-}(\varepsilon'),
$$
and
$$
E_{B_\rho\cup\{n\}}h(\varepsilon',1)
=
E_{B_\rho}h_{+}(\varepsilon').
$$
If $n\notin A_\rho$, then $A_\rho=B_\rho$, and therefore
$$
E_{B_\rho}h(\varepsilon',-1)
=
E_{B_\rho}h(\varepsilon',1)
=
\frac{
E_{B_\rho}h_{-}(\varepsilon')
+
E_{B_\rho}h_{+}(\varepsilon')
}{2}.
$$

Notice that
$$
\begin{aligned}
\mathbb{E}_{A_{\rho}}\|E_{A_{\rho}}h\|_{L^q(\Omega_n)}^q
&=
\mathbb{E}_{A_{\rho}}\mathbb{E}_{\varepsilon'}
\left(
\frac{
|E_{A_{\rho}}h(\varepsilon',-1)|^q
+
|E_{A_{\rho}}h(\varepsilon',1)|^q
}{2}
\right)\\
&=
\mathbb{E}_{B_{\rho}}\mathbb{E}_{\varepsilon'}
\left[
\frac{\rho}{2}
\left(
|E_{B_{\rho}\cup\{n\}}h(\varepsilon',-1)|^q
+
|E_{B_{\rho}\cup\{n\}}h(\varepsilon',1)|^q
\right)
\right.\\
&\qquad\qquad\left.
+
\frac{1-\rho}{2}
\left(
|E_{B_{\rho}}h(\varepsilon',-1)|^q
+
|E_{B_{\rho}}h(\varepsilon',1)|^q
\right)
\right].
\end{aligned}
$$
Using the identities above, we obtain
$$
\begin{aligned}
\mathbb{E}_{A_{\rho}}\|E_{A_{\rho}}h\|_{L^q(\Omega_n)}^q
&=
\mathbb{E}_{B_{\rho}}\mathbb{E}_{\varepsilon'}
\left[
\frac{\rho}{2}
\left(
|E_{B_{\rho}}h_{-}(\varepsilon')|^q
+
|E_{B_{\rho}}h_{+}(\varepsilon')|^q
\right)
\right.\\
&\qquad\qquad\left.
+
(1-\rho)
\left|
\frac{
E_{B_{\rho}}h_{-}(\varepsilon')
+
E_{B_{\rho}}h_{+}(\varepsilon')
}{2}
\right|^q
\right].
\end{aligned}
$$
Applying the $n=1$ inequality gives
$$
\begin{aligned}
\mathbb{E}_{A_{\rho}}\|E_{A_{\rho}}h\|_{L^q(\Omega_n)}^q
&\le
\mathbb{E}_{B_{\rho}}\mathbb{E}_{\varepsilon'}
\left[
\left(
\frac{
|E_{B_{\rho}}h_{-}(\varepsilon')|^r
+
|E_{B_{\rho}}h_{+}(\varepsilon')|^r
}{2}
\right)^{q/r}
\right].
\end{aligned}
$$
Taking the $1/q$ power, we obtain
$$
\begin{aligned}
\left(
\mathbb{E}_{A_{\rho}}\|E_{A_{\rho}}h\|_{L^q(\Omega_n)}^q
\right)^{1/q}
&\le
\left(
\mathbb{E}_{B_{\rho}}\mathbb{E}_{\varepsilon'}
\left(
\frac{
|E_{B_{\rho}}h_{-}(\varepsilon')|^r
+
|E_{B_{\rho}}h_{+}(\varepsilon')|^r
}{2}
\right)^{q/r}
\right)^{1/q}.
\end{aligned}
$$
Since $r=1+\rho(q-1)\le q$, we have $q/r\ge1$, so Minkowski's inequality gives
$$
\begin{aligned}
\left(
\mathbb{E}_{A_{\rho}}\|E_{A_{\rho}}h\|_{L^q(\Omega_n)}^q
\right)^{1/q}
&\le
\left(
\frac{1}{2}
\left(
\mathbb{E}_{B_{\rho}}\mathbb{E}_{\varepsilon'}
|E_{B_{\rho}}h_{-}(\varepsilon')|^q
\right)^{r/q}
\right.\\
&\qquad\left.
+
\frac{1}{2}
\left(
\mathbb{E}_{B_{\rho}}\mathbb{E}_{\varepsilon'}
|E_{B_{\rho}}h_{+}(\varepsilon')|^q
\right)^{r/q}
\right)^{1/r}\\
&=
\left(
\frac{1}{2}
\left(
\mathbb{E}_{B_{\rho}}
\|E_{B_{\rho}}h_{-}\|_{L^q(\Omega_{n-1})}^{q}
\right)^{r/q}
\right.\\
&\qquad\left.
+
\frac{1}{2}
\left(
\mathbb{E}_{B_{\rho}}
\|E_{B_{\rho}}h_{+}\|_{L^q(\Omega_{n-1})}^{q}
\right)^{r/q}
\right)^{1/r}.
\end{aligned}
$$
By the induction hypothesis,
$$
\begin{aligned}
\left(
\mathbb{E}_{A_{\rho}}\|E_{A_{\rho}}h\|_{L^q(\Omega_n)}^q
\right)^{1/q}
&\le
\left(
\frac{1}{2}\|h_{-}\|_{L^r(\Omega_{n-1})}^{r}
+
\frac{1}{2}\|h_{+}\|_{L^r(\Omega_{n-1})}^{r}
\right)^{1/r}\\
&=
\|h\|_{L^r(\Omega_n)}.
\end{aligned}
$$

Now let $h:\Omega_n\to X$, where $X$ is a Banach space. We want to prove
$$
\left(
\mathbb{E}_{A_\rho}
\|E_{A_\rho}h\|_{L^q(\Omega_n,X)}^{q}
\right)^{1/q}
\le
\|h\|_{L^{r}(\Omega_n,X)}.
$$
Recall that
$$
E_{A_{\rho}}h(\varepsilon)
=
\mathbb{E}_{\eta_{A_{\rho}^c}}
h(\varepsilon_{A_{\rho}},\eta_{A_{\rho}^c}).
$$
Therefore,
$$
\begin{aligned}
\|E_{A_{\rho}}h(\varepsilon)\|_{X}
&=
\left\|
\mathbb{E}_{\eta_{A_{\rho}^c}}
h(\varepsilon_{A_{\rho}},\eta_{A_{\rho}^c})
\right\|_{X}\\
&\le
\mathbb{E}_{\eta_{A_{\rho}^c}}
\|h(\varepsilon_{A_{\rho}},\eta_{A_{\rho}^c})\|_{X}\\
&=
E_{A_\rho}(\|h\|_{X})(\varepsilon).
\end{aligned}
$$
Hence,
$$
\begin{aligned}
\left(
\mathbb{E}_{A_{\rho}}
\|E_{A_\rho}h\|_{L^q(\Omega_n,X)}^{q}
\right)^{1/q}
&=
\left(
\mathbb{E}_{A_{\rho}}\mathbb{E}_{\varepsilon}
\|E_{A_\rho}h(\varepsilon)\|_{X}^{q}
\right)^{1/q}\\
&\le
\left(
\mathbb{E}_{A_{\rho}}\mathbb{E}_{\varepsilon}
\left[
E_{A_\rho}(\|h\|_{X})(\varepsilon)
\right]^{q}
\right)^{1/q}\\
&=
\left(
\mathbb{E}_{A_{\rho}}
\|E_{A_\rho}(\|h\|_{X})\|_{L^q(\Omega_n)}^{q}
\right)^{1/q}.
\end{aligned}
$$
Define the scalar function $g(\varepsilon)=\|h(\varepsilon)\|_{X}$. Then, by the scalar inequality,
$$
\begin{aligned}
\left(
\mathbb{E}_{A_{\rho}}
\|E_{A_\rho}h\|_{L^q(\Omega_n,X)}^{q}
\right)^{1/q}
&\le
\|g\|_{L^r(\Omega_n)}
=
\|h\|_{L^r(\Omega_n,X)}.
\end{aligned}
$$
This completes the proof.
\end{proof}

We shall also need the following Orlicz embedding.

\begin{lemma}\label{lem:orlicz-embedding}
Let $1<s<2$, let $X$ be a Banach space, let $R\ge e$, and let
$\psi_{s,s/2}:[0,\infty)\to[0,\infty)$ be a Young function satisfying
$$
\psi_{s,s/2}(t)=t^s\log^{-s/2}(e+t)
$$
for every $t\ge R$. Then there exists a constant $C_R>0$ such that for every
$n\in\mathbb{N}$, every $1\le r<s$, and every $h:\Omega_n\to X$,
$$
\|h\|_{L^r(\Omega_n,X)}
\le
\frac{C_R}{\sqrt{s-r}}
\|h\|_{L_{\psi_{s,s/2}}(\Omega_n,X)}.
$$
\end{lemma}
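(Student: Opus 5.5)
Since $\|h\|_{L^r(\Omega_n,X)}=\|g\|_{L^r(\Omega_n)}$ and $\|h\|_{L_{\psi_{s,s/2}}(\Omega_n,X)}=\|g\|_{L_{\psi_{s,s/2}}(\Omega_n)}$ for the scalar function $g=\|h\|_X\ge 0$, it suffices to prove the scalar inequality. By homogeneity of both norms, we normalize $\|g\|_{L_{\psi_{s,s/2}}}=1$. Then, by the definition of the Orlicz norm (the infimum is attained by convexity and monotone convergence),
$$
\mathbb E\,\psi_{s,s/2}(g)\le 1,
$$
and the goal becomes $\mathbb E g^r\le (C_R^2/(s-r))^{r/2}$.

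We split the expectation at the level $R$:
$$
\mathbb E g^r=\mathbb E\bigl[g^r\mathbf 1_{\{g<R\}}\bigr]+\mathbb E\bigl[g^r\mathbf 1_{\{g\ge R\}}\bigr].
$$
The first term is at most $R^r\le R^2$, which is a constant depending only on $R$. For the second term we write
$$
g^r=\bigl(g^s\log^{-s/2}(e+g)\bigr)\cdot g^{-(s-r)}\log^{s/2}(e+g)
$$
and bound the second factor by its supremum over $t\ge R$:
$$
M:=\sup_{t\ge R}\, t^{-(s-r)}\log^{s/2}(e+t).
$$
With $u=\log t$ (and $\log(e+t)\le 1+\log t$ for $t\ge e$), this is at most
$$
\sup_{u\ge 1}e^{-(s-r)u}(1+u)^{s/2}\le C\,(s-r)^{-s/2},
$$
where the elementary bound $\max_u e^{-au}(1+u)^{b}\le C_b\,(1+a^{-b})$ is used with $a=s-r$, $b=s/2$. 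Since $s-r\le 1$, this gives $M\le C(s-r)^{-s/2}$. Hence
$$
\mathbb E\bigl[g^r\mathbf 1_{\{g\ge R\}}\bigr]\le M\,\mathbb E\,\psi_{s,s/2}(g)\le C(s-r)^{-s/2}.
$$

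Combining the two pieces, $\mathbb E g^r\le R^2+C(s-r)^{-s/2}\le C'(s-r)^{-s/2}$. Taking $r$-th roots gives
$$
\|g\|_{L^r}\le C''\,(s-r)^{-s/(2r)}.
$$
This is the main obstacle: the exponent is $s/(2r)$ rather than the claimed $1/2$. For $r$ near $s$ the discrepancy is harmless, since $(s-r)^{-s/(2r)}=(s-r)^{-1/2}(s-r)^{-(s-r)/(2r)}$ and $(s-r)^{-(s-r)/(2r)}\le e^{1/(2e)}$ (using $x^{-x}\le e^{1/e}$ and $r\ge1$). For $r$ away from $s$, say $r\le s-\tfrac12$ if $s>3/2$, all quantities are uniformly bounded, and $(s-r)^{-1/2}\ge 1$ absorbs the constant. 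Alternatively, in all cases $(s-r)^{-s/(2r)}\le (s-r)^{-1/2}\cdot e^{1/(2e)}$ for $s-r\le1$, so a single estimate suffices.

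Thus $\|g\|_{L^r}\le C_R(s-r)^{-1/2}$ with $C_R$ depending only on $R$ (and $s$, which is fixed). The constant must be uniform in $n$ and in $r\in[1,s)$, and it is: only the constraint $\mathbb E\,\psi_{s,s/2}(g)\le 1$ was used, which does not depend on $n$, and every bound above holds uniformly in $r$. Undoing the normalization, $\|h\|_{L^r(\Omega_n,X)}\le C_R(s-r)^{-1/2}\|h\|_{L_{\psi_{s,s/2}}(\Omega_n,X)}$, which is the statement of Lemma~\ref{lem:orlicz-embedding}.
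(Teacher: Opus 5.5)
Your proposal is correct and follows essentially the same route as the paper. You normalize the Orlicz norm, split at level $R$, and bound $t^{-(s-r)}\log^{s/2}(e+t)$ by $C(s-r)^{-s/2}$ via the maximum of $e^{-au}u^{b}$; you then absorb the extra factor $(s-r)^{-(s-r)/(2r)}\le e^{1/(2e)}$ exactly as the paper does. The differences are cosmetic: the upfront scalar reduction, and using $\log(e+t)\le 1+\log t$ instead of $\log(e+t)\le 2\log t$, which gives an unspecified $s$-dependent constant rather than the paper's explicit $e^{1/(2e)}(R^2+2)$.
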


\begin{proof}
Set $a=s-r$. Then $0<a\le s-1<1$. By homogeneity, it is enough to consider functions $g:\Omega_n\to X$ satisfying
$$
\|g\|_{L_{\psi_{s,s/2}}(\Omega_n,X)}=1.
$$
Under this normalization, it suffices to prove that
$$
\|g\|_{L^r(\Omega_n,X)}
\le
\frac{C_R}{\sqrt{s-r}}.
$$
By the definition of the Luxemburg norm,
$$
\mathbb{E}_{\varepsilon}\psi_{s,s/2}(\|g(\varepsilon)\|_X)\le 1.
$$

For $t\ge R$, since $r=s-a$, we can write
$$
t^r=\psi_{s,s/2}(t)t^{-a}\log^{s/2}(e+t).
$$
Since $t\ge R\ge e$, we have $\log(e+t)\le 2\log t$. Consequently,
$$
t^{-a}\log^{s/2}(e+t)
\le
2^{s/2}t^{-a}(\log t)^{s/2}.
$$
Let $u=\log t$. Then
$$
t^{-a}(\log t)^{s/2}=e^{-au}u^{s/2}.
$$
The function $F(u)=e^{-au}u^{s/2}$ on $(0,\infty)$ attains its maximum at $u=\frac{s}{2a}$. Thus,
$$
\sup_{u\ge\log R}e^{-au}u^{s/2}
\le
e^{-s/2}\left(\frac{s}{2a}\right)^{s/2}.
$$
Since $1<s<2$, we have $0<\frac{s}{2e}<1$, and hence
$$
\sup_{u\ge\log R}e^{-au}u^{s/2}
\le
a^{-s/2}.
$$
It follows that
$$
t^r
\le
2^{s/2}a^{-s/2}\psi_{s,s/2}(t)
\le
2a^{-s/2}\psi_{s,s/2}(t).
$$

On the set where $\|g(\varepsilon)\|_X<R$, we simply have
$$
\|g(\varepsilon)\|_X^r<R^r.
$$
On the complementary set, the estimate above gives
$$
\|g(\varepsilon)\|_X^r
\le
2a^{-s/2}\psi_{s,s/2}(\|g(\varepsilon)\|_X).
$$
Therefore,
$$
\begin{aligned}
\|g\|_{L^r(\Omega_n,X)}^r
&=
\mathbb{E}_{\varepsilon}\|g(\varepsilon)\|_X^r\\
&\le
R^r+
2a^{-s/2}
\mathbb{E}_{\varepsilon}\psi_{s,s/2}(\|g(\varepsilon)\|_X)\\
&\le
R^r+2a^{-s/2}.
\end{aligned}
$$
Since $0<a<1$, we have $a^{-s/2}\ge1$, and since $R>1$ and $r<s<2$,
$$
R^r\le R^2.
$$
Thus,
$$
\|g\|_{L^r(\Omega_n,X)}^r
\le
(R^2+2)a^{-s/2}.
$$
Taking the $r$-th root yields
$$
\|g\|_{L^r(\Omega_n,X)}
\le
(R^2+2)^{1/r}a^{-s/(2r)}.
$$
Since $s=r+a$,
$$
\frac{s}{2r}
=
\frac12+\frac{a}{2r},
$$
and consequently
$$
a^{-s/(2r)}
=
a^{-1/2}a^{-a/(2r)}.
$$
Since $r\ge1$,
$$
a^{-a/(2r)}
\le
a^{-a/2}
=
e^{\frac{a}{2}\log\frac1a},
$$
and
$$
\sup_{0<a<1}a\log\frac1a=\frac1e.
$$
Therefore,
$$
\|g\|_{L^r(\Omega_n,X)}
\le
e^{1/(2e)}(R^2+2)a^{-1/2}.
$$
Recalling that $a=s-r$, the desired estimate follows with
$$
C_R=e^{1/(2e)}(R^2+2).
$$
\end{proof}

\section{Proofs of the Main Results}

\begin{proof}[Proof of Theorem~\ref{thm:integral-estimate}]
Since $T_1f=f$ and $T_0f=\mathbb{E}f$, we have

$$
f-\mathbb{E}f
=
\int_0^1\frac{d}{d\rho}T_\rho f\,d\rho.
$$

Therefore, by Minkowski's integral inequality,

$$
\|f-\mathbb{E}f\|_{L^s(\Omega_n,X)}
\le
\int_0^1
\left\|
\frac{d}{d\rho}T_\rho f
\right\|_{L^s(\Omega_n,X)}
\,d\rho.
$$

Thus, it is enough to estimate the integrand on the right-hand side.
Fix $0\le \rho<1$ and put $r=1+\rho(s-1)$. By Lemma~\ref{lem:noise-derivative}, we have 

$$
\frac{d}{d\rho}T_{\rho}f
=
\frac{1}{1-\rho}\mathbb{E}_{A_\rho}
\sum_{j\notin A_\rho}d_{j}^{A_\rho}f.
$$

Therefore, by the triangle inequality,

$$
\left\|
\frac{d}{d\rho}T_{\rho}f
\right\|_{L^s(\Omega_n,X)}
\le
\frac{1}{1-\rho}
\mathbb{E}_{A_\rho}
\left\|
\sum_{j\notin A_\rho}d_{j}^{A_\rho}f
\right\|_{L^s(\Omega_n,X)}.
$$

Applying Lemma~\ref{lem:reveal-type} for every fixed realization of $A_\rho$, and Jensen's inequality, we obtain

$$
\begin{aligned}
\left\|
\frac{d}{d\rho}T_{\rho}f
\right\|_{L^s(\Omega_n,X)}
&\le
\frac{T_{s}^{R}(X)}{1-\rho}
\mathbb{E}_{A_\rho}
\left(
\sum_{j\notin A_{\rho}}
\|d_j^{A_{\rho}}f\|_{L^s(\Omega_n,X)}^s
\right)^{1/s}\\
&\le
\frac{T_{s}^{R}(X)}{1-\rho}
\left(
\sum_{j=1}^{n}
\mathbb{E}_{A_\rho}
\left[
\mathbf{1}_{\{j\notin A_\rho\}}
\|d_j^{A_{\rho}}f\|_{L^s(\Omega_n,X)}^s
\right]
\right)^{1/s}.
\end{aligned}
$$

Now, using the definition,

$$
\begin{aligned}
\mathbb{E}_{A_\rho}
\left[
\mathbf{1}_{\{j\notin A_\rho\}}
\|d_j^{A_{\rho}}f\|_{L^s(\Omega_n,X)}^s
\right]
&=
\sum_{A\subseteq[n]\setminus\{j\}}
\rho^{|A|}(1-\rho)^{n-|A|}
\|d_j^{A}f\|_{L^s(\Omega_n,X)}^s\\
&=
(1-\rho)
\sum_{A\subseteq[n]\setminus\{j\}}
\rho^{|A|}(1-\rho)^{(n-1)-|A|}
\|d_j^{A}f\|_{L^s(\Omega_n,X)}^s.
\end{aligned}
$$

For fixed $j\in[n]$, write

$$
D_jf(\varepsilon)=\varepsilon_j g_j(\varepsilon),
$$

where

$$
g_j(\varepsilon)
=
\sum_{B\subseteq[n]\setminus\{j\}}
\widehat f(B\cup\{j\})W_B(\varepsilon).
$$

Then $g_j$ does not depend on $\varepsilon_j$. Moreover, for every
$A\subseteq[n]\setminus\{j\}$,

$$
d_j^Af
=
E_{A\cup\{j\}}D_jf
=
\varepsilon_jE_Ag_j.
$$

Hence,
$$
\|d_j^Af\|_{L^s(\Omega_n,X)}
=
\|E_Ag_j\|_{L^s(\Omega_{n-1},X)}.
$$
Therefore, by Lemma~\ref{lem:random-restriction} applied on the $(n-1)$-dimensional cube,

$$
\begin{aligned}
\mathbb{E}_{A_\rho}
\left[
\mathbf{1}_{\{j\notin A_\rho\}}
\|d_j^{A_{\rho}}f\|_{L^s(\Omega_n,X)}^s
\right]
&=
(1-\rho)
\sum_{A\subseteq[n]\setminus\{j\}}
\rho^{|A|}(1-\rho)^{(n-1)-|A|}
\|E_Ag_j\|_{L^s(\Omega_{n-1},X)}^s\\
&\le
(1-\rho)
\|g_j\|_{L^r(\Omega_{n-1},X)}^s.
\end{aligned}
$$

Since $D_jf(\varepsilon)=\varepsilon_jg_j(\varepsilon)$,

$$
\|g_j\|_{L^r(\Omega_{n-1},X)}
=
\|D_jf\|_{L^r(\Omega_n,X)}.
$$

Thus, we obtain

$$
\left\|
\frac{d}{d\rho}T_{\rho}f
\right\|_{L^s(\Omega_n,X)}
\le
T_{s}^{R}(X)(1-\rho)^{1/s-1}
\left(
\sum_{j=1}^{n}
\|D_jf\|_{L^r(\Omega_n,X)}^s
\right)^{1/s}.
$$

Now recall that $r=1+\rho(s-1)$, hence

$$
\left\|
\frac{d}{d\rho}T_{\rho}f
\right\|_{L^s(\Omega_n,X)}
\le
T_{s}^{R}(X)(1-\rho)^{1/s-1}
\left(
\sum_{j=1}^{n}
\|D_jf\|_{L^{1+\rho(s-1)}(\Omega_n,X)}^s
\right)^{1/s}.
$$

Putting this into the first inequality above, we obtain

$$
\|f-\mathbb{E}f\|_{L^s(\Omega_n,X)}
\le
T_{s}^{R}(X)
\int_{0}^{1}
(1-\rho)^{1/s-1}
\left(
\sum_{j=1}^{n}
\|D_jf\|_{L^{1+\rho(s-1)}(\Omega_n,X)}^s
\right)^{1/s}
\,d\rho.
$$

The proof is finished.
\end{proof}

We now combine the preceding estimate with the Orlicz embedding from Section~2 to obtain the endpoint Talagrand type inequality.

\begin{proof}[Proof of Theorem~\ref{thm:rademacher-talagrand}]
By Theorem~\ref{thm:integral-estimate},
$$
\|f-\mathbb{E}f\|_{L^s(\Omega_n,X)}
\le
T_s^R(X)
\int_0^1
(1-\rho)^{1/s-1}
\left(
\sum_{j=1}^n
\|D_jf\|_{L^{1+\rho(s-1)}(\Omega_n,X)}^s
\right)^{1/s}
\,d\rho.
$$
Let $r=1+\rho(s-1)$. Then
$$
s-r=(s-1)(1-\rho).
$$
By Lemma~\ref{lem:orlicz-embedding},
$$
\|D_jf\|_{L^r(\Omega_n,X)}
\le
\frac{C}{\sqrt{(s-1)(1-\rho)}}
\|D_jf\|_{L_{\psi_{s,s/2}}(\Omega_n,X)}.
$$
Therefore,
$$
\left(
\sum_{j=1}^n
\|D_jf\|_{L^{1+\rho(s-1)}(\Omega_n,X)}^s
\right)^{1/s}
\le
\frac{C}{\sqrt{(s-1)(1-\rho)}}
\left(
\sum_{j=1}^n
\|D_jf\|_{L_{\psi_{s,s/2}}(\Omega_n,X)}^s
\right)^{1/s}.
$$
Hence,
$$
\|f-\mathbb{E}f\|_{L^s(\Omega_n,X)}
\le
\frac{C\,T_s^R(X)}{\sqrt{s-1}}
\left(
\int_0^1
(1-\rho)^{1/s-3/2}\,d\rho
\right)
\left(
\sum_{j=1}^n
\|D_jf\|_{L_{\psi_{s,s/2}}(\Omega_n,X)}^s
\right)^{1/s}.
$$

Since $1<s<2$,
$$
\frac{1}{s}-\frac{3}{2}>-1.
$$
Hence the integral is finite. In fact,
$$
\int_0^1(1-\rho)^{1/s-3/2}\,d\rho
=
\frac{1}{\frac{1}{s}-\frac{1}{2}}
=
\frac{2s}{2-s}.
$$
Therefore,
$$
\|f-\mathbb{E}f\|_{L^s(\Omega_n,X)}
\le
\frac{2sC}{(2-s)\sqrt{s-1}}\,T_s^R(X)
\left(
\sum_{j=1}^n
\|D_jf\|_{L_{\psi_{s,s/2}}(\Omega_n,X)}^s
\right)^{1/s}.
$$
Thus, setting
$$
C_s=\frac{2sC}{(2-s)\sqrt{s-1}},
$$
we obtain
$$
\|f-\mathbb{E}f\|_{L^s(\Omega_n,X)}
\le
C_sT_s^R(X)
\left(
\sum_{j=1}^n
\|D_jf\|_{L_{\psi_{s,s/2}}(\Omega_n,X)}^s
\right)^{1/s}.
$$
Hence $X$ has Talagrand type $(s,\psi_{s,s/2})$.
\end{proof}

\begin{remark}
We know that our argument does not cover the endpoint case $s=2$, since the integral on the right-hand side diverges. Cordero-Erausquin and Eskenazis \cite{CorderoEskenazisLogSob} showed that settling the endpoint problem is equivalent to proving the weaker estimate
$$
\|f-\mathbb{E}f\|_{L^1(\Omega_n,X)}^2
\lesssim_X
\sum_{j=1}^n
\|D_jf\|_{L_2(\log L)^{-1}(\Omega_n,X)}^2
$$
for every Banach space $X$ of Rademacher type $2$.
\end{remark}

\section*{Acknowledgments}
The author is grateful to Paata Ivanisvili for introducing him to the problem and to Alexandros Eskenazis for helpful discussions, including for bringing Talagrand's work on infratype to his attention. The author acknowledges the use of AI tools. All mathematical arguments and proofs in the final manuscript were checked and written by the author.

\end{document}